\documentclass[10pt]{amsart}
\usepackage{latexsym,amsmath,amssymb,graphicx,yhmath}
\usepackage{float}
\usepackage[bf, small]{caption}
\usepackage[all,web]{xy}
\usepackage{color}
\usepackage{hyperref}

\usepackage{xcolor}

\def\xyellowspace{%
  \sbox0{\colorbox{yellow}{\strut\ }}%\mathbf{}
  \dimen0=\wd0\relax
  \hskip0pt\cleaders\box0\hskip\dimen0\hskip0pt}

\begingroup
\catcode`\ =\active%
\gdef\makeyellowspace{\let \xyellowspace\catcode`\ =\active}%
\endgroup

\def\?#1{\colorbox{yellow}{\strut#1}}

\DeclareFontFamily{OT1}{rsfs10}{}
\DeclareFontShape{OT1}{rsfs10}{m}{n}{ <-> rsfs10 }{}
\DeclareMathAlphabet{\mathscript}{OT1}{rsfs10}{m}{n}

\DeclareMathOperator{\Cl}{Cl}       % \Cl    = gruppo dei divisori di Weil mod. linear equivalence
\DeclareMathOperator{\Cox}{Cox}     % \Cox   = anello (algebra) di Cox
\DeclareMathOperator{\Mov}{Mov}     % \Mov   = moving cone
\DeclareMathOperator{\Nef}{Nef}     % \Nef   = cono dei divisori nef
\DeclareMathOperator{\Eff}{Eff}     % \Eff   = cono dei divisori effettivi
\DeclareMathOperator{\Relint}{Relint}  % \Relint   = interno di un cono nel suo span lineare
\makeatletter
\def\widebreve{\mathpalette\wide@breve}
\def\wide@breve#1#2{\sbox\z@{$#1#2$}%
     \mathop{\vbox{\m@th\ialign{##\crcr
\kern0.08em\brevefill#1{0.8\wd\z@}\crcr\noalign{\nointerlineskip}%
                    $\hss#1#2\hss$\crcr}}}\limits}
\def\brevefill#1#2{$\m@th\sbox\tw@{$#1($}%
  \hss\resizebox{#2}{\wd\tw@}{\rotatebox[origin=c]{90}{\upshape(}}\hss$}
\makeatletter

\title[On a conjecture of Fujino and Sato]{On a conjecture of Fujino and Sato}

\author[M. Rossi]{Michele Rossi}

\date{\today}

\address{Dipartimento di Matematica, Universit\`a di Milano Bicocca,
Via Roberto Cozzi, 55, 20126 Milano} \email{michele.rossi@unimib.it}

\thanks{The author was partially supported by the I.N.D.A.M. as a member of the G.N.S.A.G.A.\\
 Author's ORCID:0000-0001-6191-2087}
 \keywords{fan, polytope, toric variety, Gale duality, fan matrix, weight matrix, Mori dream space, secondary fan, Galfand-Kaparanov-Zelevinsky decomposition, quasi-isomorphism, small $\Q$-factorial modifications}
\subjclass[2010]{14M25\and 14E30}

\def \s{\sigma }

\def \Si{\Sigma }

\def \g{\gamma}

\def \ét{\'{e}tale}

\def \1{\mathbf{1}}
\def \0{\mathbf{0}}

\def\p2{\mathbb{P}^2}
\def\p3{\mathbb{P}^3}
\def\p4{\mathbb{P}^4}

\def\cO{\mathcal{O}}

\def\K{\mathbb{K}}
\def\R{\mathbb{R}}

\def\Q{\mathbb{Q}}

\def\B{\mathcal{B}}

\def\U1{\mathfrak{U}^{(1)}}

\def\sQm{s\,$\Q$m}

\theoremstyle{plain}
\newtheorem{theorem}{Theorem}[section]

\newtheorem{prop-def}[theorem]{Proposition--Definition}

\newtheorem{thm-def}[theorem]{Theorem--Definition}

\newtheorem{conjecture}[theorem]{Conjecture}

\newtheorem*{a-proposition}{Proposition}

\theoremstyle{remark}
\newtheorem{remark}[theorem]{Remark}

\newtheorem{example}[theorem]{Example}

\theoremstyle{definition}

\newtheorem*{step I}{Step I}
\newtheorem*{step II}{Step II}
\newtheorem*{step III}{Step III}
\newtheorem*{step IV}{Step IV}

\newtheorem*{acknowledgements}{Acknowledgements}

\begin{document}

 %\pagestyle{empty}
 %\DefineParaStyle{Maple Heading 4}
 %\DefineParaStyle{Maple Heading 2}
 %\DefineParaStyle{Maple Text Output}
 %\DefineParaStyle{Maple Bullet Item}
 %\DefineParaStyle{Maple Warning}
 %\DefineParaStyle{Maple Error}
 %\DefineParaStyle{Maple Dash Item}
% \DefineParaStyle{Maple Heading 3}
 %\DefineParaStyle{Maple Heading 1}
 %\DefineParaStyle{Maple Title}
 %\DefineParaStyle{Maple Normal}
% \DefineCharStyle{Maple 2D Input}
% \DefineCharStyle{Maple Maple Input}
 %\DefineCharStyle{Maple 2D Output}
 %\DefineCharStyle{Maple 2D Math}
 %\DefineCharStyle{Maple Hyperlink}
\maketitle

\begin{abstract} We revisit results of Fujino--Sato on complete non-projective $\mathbb Q$-factorial toric varieties and their conjectural factorization by flips. We show that their main results admit short conceptual proofs, avoiding any restriction on the dimension and the Picard number, from the general theory of Cox rings and Mori Dream Spaces, once one organizes small $\mathbb Q$-factorial modifications via the GKZ (secondary fan) decomposition of the moving cone. Moreover, we extend this viewpoint beyond the toric case by proving an analogous statement for complete $\mathbb Q$-factorial weak Mori Dream Spaces: any non-projective such variety admits a divisor $D$ and a $D$-flip to a (projective) Mori Dream Space. Our approach highlights the role of chambers and wall-crossing in the secondary fan as a unifying framework for these constructions.
\end{abstract}

\section*{Introduction}
The birational geometry of a complete variety is often governed by the interplay between divisor cones and small $\mathbb Q$-factorial modifications. In the toric setting this philosophy becomes especially concrete: the movable cone carries a canonical chamber decomposition (the Gelfand-Kapranov-Zelewinsky (GKZ) one or secondary fan) whose chambers encode nef cones of birational models and whose walls govern flips and related wall-crossing operations.

In the recent paper \cite{FS2026}, Fujino and Sato proved that every complete toric variety is birational to a projective $\mathbb Q$-factorial toric variety via a map which is an isomorphism in codimension one, and they formulated a conjecture predicting that any complete $\mathbb Q$-factorial toric variety can be connected to a projective one by a finite sequence of flips, flops and anti-flips (verifying it for smooth threefold of Picard number at most 5). The purpose of the present note is to explain that these results are naturally framed, and in fact streamlined, by the Cox ring/Mori Dream Space machinery, once one adopts the secondary fan viewpoint.

More precisely, our first contribution is a short conceptual derivation of the Fujino--Sato results from the general theory developed by Hu and Keel in their pivotal paper \cite{Hu-Keel} and extended to the complete non-projective setup by Arzhantsev, Derenthal, Hausen and Laface in their extensive treatment \cite{ADHL}, together with the description of small $\mathbb Q$-factorial modifications as wall-crossing in the GKZ decomposition, as described in our previous work \cite{R-wMDS}. In this language, the existence of a projective $\mathbb Q$-factorial model is reflected by the presence of a full-dimensional GKZ chamber contained in the moving cone, while passing from a non-projective situation to a projective one is governed by crossing suitable walls.

Our second contribution is a genuine extension of this picture beyond toric varieties. We prove an analogous statement for complete $\mathbb Q$-factorial \emph{weak Mori Dream Spaces} (wMDS): if $X$ is non-projective, then there exists a divisor $D$ and a $D$-flip producing a (projective) Mori Dream Space. This provides a common conceptual framework for the Fujino--Sato phenomenon and its wMDS generalization, emphasizing that the key mechanism is the chamber structure on the movable cone rather than toric combinatorics per se.

The note is intentionally brief. Its main goal is to isolate a clean conceptual explanation and to point out that the secondary fan viewpoint offers an efficient ``navigation tool'' through birational models and wall-crossing, both in the toric and in the (weak) Mori Dream Space setting.

\begin{acknowledgements}
  I would like to thank H. Sato for the fruitful and stimulating correspondence, and for his encouragement to publish this note.
\end{acknowledgements}

\section{Fujino and Sato results}
In the recent paper \cite{FS2026} O.~Fujino and H.~Sato proved at first the following

\begin{theorem}[Thm.~1.1 in \cite{FS2026}]\label{thm.1.1}
Let $X$ be a complete toric variety. Then we can always construct a projective $\Q$-factorial toric variety $X'$ which is isomorphic to $X$ in codimension 1.
\end{theorem}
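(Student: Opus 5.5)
The plan is to reduce to the $\Q$-factorial case and then use the Cox ring / GKZ description of $\Q$-factorial toric varieties sharing the same rays.

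\emph{Reduction.} Let $X=X(\Sigma)$ with fan $\Sigma\subset N_\R$ complete, and let $\Sigma(1)=\{\rho_1,\dots,\rho_n\}$ with primitive generators $v_1,\dots,v_n$. A toric variety $X'$ is isomorphic to $X$ in codimension $1$ exactly when its fan $\Sigma'$ has the same rays, $\Sigma'(1)=\Sigma(1)$. Both varieties then contain the common open toric subvariety given by the fan of rays, whose complement has codimension $\geq2$. So it is enough to produce a complete simplicial fan $\Sigma'$ with $\Sigma'(1)=\Sigma(1)$ that is the normal fan of a polytope. We may assume $X$ has no torus factors; otherwise we split them off, since $\Sigma$ is complete and so the $v_i$ span $N_\R$.

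\emph{Gale duality and the GKZ decomposition.} Let $V=(v_1,\dots,v_n)$ be the fan matrix and $Q$ a Gale dual weight matrix. $Q$ gives the grading of the Cox ring $\K[x_1,\dots,x_n]$ by $\Cl(X)$, and its columns $q_i$ generate $\Eff(X)=\langle Q\rangle\subset \Cl(X)_\R$. Since the $v_i$ positively span $N_\R$, the cone $\langle Q\rangle$ is pointed.

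Consider the moving cone
\[
\Mov(V)=\bigcap_{i=1}^n\langle q_1,\dots,\widehat{q_i},\dots,q_n\rangle .
\]
I then invoke the secondary fan (GKZ) decomposition of $\langle Q\rangle$, following Oda--Park, Berchtold--Hausen, and \cite{ADHL}. For $w$ in the relative interior of a full-dimensional chamber $\gamma\subset\Mov(V)$, the GIT quotient $X_\gamma=(\K^n)^{ss}(w)/\!/G$ is a projective $\Q$-factorial toric variety. Its fan is simplicial, has exactly the rays $\rho_1,\dots,\rho_n$, and satisfies $\Nef(X_\gamma)=\gamma$. Setting $X':=X_\gamma$ then finishes the proof.

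\emph{Main obstacle.} The key step is that $\Mov(V)$ is full-dimensional in $\Cl(X)_\R$. Given that, it contains a full-dimensional GKZ chamber, because finitely many chambers cover it and at least one must have full dimension.

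To prove full-dimensionality I would argue as follows. Because $\Sigma$ is complete, every $v_i$ lies in the relative interior of some cone generated by the other rays, or at least the remaining $v_j$ still positively span $N_\R$ together with the fan structure. The dual statement in the Gale picture is that each $\langle q_j: j\neq i\rangle$ is full-dimensional. Then I need a common interior point. The natural candidate is $\sum_i q_i$, or a generic perturbation of it: it lies in the interior of each $\langle q_j:j\neq i\rangle$ exactly when $V$ has no ray that is "essential," i.e.\ when no $q_i$ spans an extremal ray needed for full dimension.

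A cleaner route, which I would try first, is the following. $\Mov(X)$ is full-dimensional for any complete toric $X$ without torus factors: $\Mov(X)$ contains the image of an ample-like class on a small $\Q$-factorialization $\widehat X\to X$, obtained by simplicially refining $\Sigma$ without adding rays. Pulling back a Cartier divisor class that is positive on all cones gives a class whose base locus has codimension $\geq2$, and perturbing it within the open condition "strictly convex on each maximal cone of $\Sigma$" keeps it in $\Mov$. If that fails because $X$ need not carry any such divisor, I would instead use the standard Gale-duality fact that $\langle Q\rangle$ minus any one generator is still full-dimensional iff the $v_j$, $j\neq i$, span $N_\R$ (they do for $n\ge \dim N+1$ complete fans). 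I would then show directly, via Carathéodory-type arguments, that the intersection of these cones has nonempty interior, e.g.\ containing $\sum q_i$.
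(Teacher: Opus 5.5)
Your reduction (equal rays give an isomorphism in codimension~1) and your GKZ step (a full-dimensional chamber $\gamma\subseteq\Mov(V)$ yields a simplicial projective toric variety with exactly the rays $\rho_1,\dots,\rho_n$) are fine. The gap is the step you yourself single out, $\operatorname{int}\Mov(V)\neq\emptyset$: it is never proved, and it carries the whole content of the theorem. None of your three routes works.

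\emph{The anticanonical candidate fails.} The class $\sum_i q_i$ need not lie in $\Mov(V)$ at all. Take the complete fan in $\mathbb{Z}^2$ with rays $v_1=(1,0)$, $v_2=(0,1)$, $v_3=(-1,-1)$, $v_4=(-3,-4)$. Then $v_3=\tfrac{1}{8}v_1+\tfrac{1}{2}v_2+\tfrac{3}{8}v_4$, so the inequality $\langle m,v_3\rangle\ge-1$ is strict wherever the other three inequalities $\langle m,v_j\rangle\ge-1$ hold. By the translation explained below, this says exactly that $\sum_iq_i\notin\langle q_j:j\neq3\rangle$; geometrically, $D_3$ is a $(-3)$-curve with $-K\cdot D_3=-1$. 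Since the complement of a closed cone is open, no small perturbation helps. In general $\sum_iq_i\in\operatorname{int}\Mov(V)$ iff every $v_i$ is a vertex of $\operatorname{conv}(v_1,\dots,v_n)$.

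\emph{The other two routes.} The ``ample-like class'' route only works when $X$ is already projective. For non-projective $X$ there is no such class, and producing one on \emph{some} simplicial fan with the rays $\rho_1,\dots,\rho_n$ is precisely the claim. Finally, full-dimensionality of each $\langle q_j:j\neq i\rangle$ says nothing about their intersection. (By Gale duality it just means $v_i\neq0$, not that the $v_j$, $j\neq i$, span.) No Carath\'eodory-type argument is actually given.

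\emph{How to close the gap.} Read the condition back in $M_\R$. The fibres of $Q$ are $a+V^{T}M_\R$, and the interior of a cone spanned by a spanning set consists of its strictly positive combinations. Hence $Qa\in\operatorname{int}\langle q_j:j\neq i\rangle$ iff some $m$ satisfies $\langle m,v_i\rangle=-a_i$ and $\langle m,v_j\rangle>-a_j$ for all $j\neq i$. Thus $\operatorname{int}\Mov(V)\neq\emptyset$ iff some polytope $P_a=\{m:\langle m,v_j\rangle\ge-a_j\ \forall j\}$ has all $n$ of its inequalities facet-defining.

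Such a $P_a$ exists. Start from $P_{\mathbf 1}$. For each $v_i$ that is not yet a facet normal, cut off a thin slice near the face on which $\langle\cdot,v_i\rangle$ is minimal. That face is not a facet, because the $v_j$ are pairwise non-proportional. So a thin enough cut creates a facet with normal $v_i$ without destroying any existing facet. Alternatively, apply Minkowski's existence theorem to a relation $\sum_jc_jv_j=0$ with all $c_j>0$.

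With this lemma inserted, your argument becomes a self-contained combinatorial proof. The paper sidesteps it. It takes a simplicial refinement of $\Si$ with no new rays and observes that the resulting complete $\Q$-factorial toric variety is a wMDS. It then quotes Theorem~\ref{lemma1} (Thm.~4.3.3.1 in \cite{ADHL}) for a small $\Q$-factorial modification to a MDS. So in the paper, the existence of a full-dimensional chamber inside the moving cone is imported from that cited result rather than proved by hand.
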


Then they stated the following

\begin{conjecture}[Conj.~1.4 in \cite{FS2026}]\label{conj.1.4}
  Let $X$ be a complete $\Q$-factorial toric variety. Then there exists a finite sequence consisting of flips, flops, or anti-flips
  \begin{equation*}
    X=:X_0\dashrightarrow X_1\dashrightarrow\cdots\dashrightarrow X_m=:X'
  \end{equation*}
  such that $X'$ is a projective $\Q$-factorial toric variety.
\end{conjecture}

In their Thm.~1.5 they proved this conjecture for any smooth complete toric threefold of Picard number at most 5.

\section{A short proof of Theorem \ref{thm.1.1}}

First of all, notice that, given a complete toric variety $X(\Si)$, there always exists a small $\Q$-factorial resolution $X'(\Si')$ of $X$, obtained by taking any simplicial subdivision $\Si'$ of the fan $\Si$: namely introduce in any non-simplicial cone  $\s\in\Si$ a sufficient number of faces of dimension $>1$ subdividing $\s$ in a union of simplicial cones. Clearly $\Si'$ turns out to be a refinement of $\Si$ ($\Si'\preceq\Si$) giving rise to an obvious birational morphism $X'\to X$ whose exceptional locus is the union of the closure of the torus orbits of the special points of any new introduced face. Then \emph{$X'$ is isomorphic in codimension 1 to $X$}. Theorem~\ref{thm.1.1} then follows by recalling that a complete $\Q$-factorial toric variety $X'$ is always a weak Mori Dream Space (wMDS), as defined in Definition 7 of \cite{R-wMDS}, and by applying the following

\begin{theorem}[Lemma 1 in \cite{R-wMDS}, Thm.~4.3.3.1 in \cite{ADHL}]\label{lemma1}
  A $\Q$-factorial and complete algebraic variety $X$ is a wMDS if and only if there exists small $\Q$-factorial modification (\sQm) $f:X\dashrightarrow X'$ such that $X'$ is a MDS.
\end{theorem}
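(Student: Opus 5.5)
We need to prove: a $\Q$-factorial complete variety $X$ is a wMDS if and only if there is a \sQm\ $f:X\dashrightarrow X'$ with $X'$ a MDS. Recall that a wMDS is a normal complete $\Q$-factorial variety with finitely generated class group $\Cl(X)$ and finitely generated Cox ring $\Cox(X)$. A MDS (Hu--Keel) is moreover projective, and its effective cone decomposes into finitely many Mori chambers, the nef cones of its \sQm's.

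\emph{The direction ``$\Leftarrow$''} is formal. A \sQm\ $f$ is an isomorphism in codimension one, so $f_*$ identifies Weil divisors, linear equivalence and the spaces of sections. Hence $\Cl(X)\cong\Cl(X')$ is finitely generated and $\Cox(X)\cong\Cox(X')$ as $\Cl$-graded algebras. Since $X'$ is a MDS, $\Cox(X')$ is finitely generated. As $X$ is complete and $\Q$-factorial by hypothesis, $X$ is a wMDS.

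\emph{For ``$\Rightarrow$''} I would use the GIT description of $X$ from \cite{ADHL}. Set $R=\Cox(X)$, a finitely generated $K$-graded algebra with $K=\Cl(X)$, and let $\overline X=\operatorname{Spec}R$ with the action of the quasitorus $H=\operatorname{Spec}\K[K]$. Then $X=\widehat X/\!\!/H$ for an open $H$-invariant subset $\widehat X\subseteq\overline X$, and $\Q$-factoriality says that $X$ is a geometric quotient with finite isotropy in codimension one. The Mori-theoretic cones $\Mov(X)\subseteq\Eff(X)$ live in $K_\R$. The GIT (GKZ) fan of the $H$-action on $\overline X$ gives a finite decomposition of $\Eff(X)$, and its full-dimensional chambers in $\Mov(X)$ correspond to projective $\Q$-factorial GIT quotients $X_\gamma=\overline X^{ss}(\gamma)/\!\!/H$.

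The steps, in order, are as follows.
\begin{enumerate}
\item Show that $\Mov(X)$ is full-dimensional, being the moving cone of a complete $\Q$-factorial variety with finitely generated Cox ring.
\item Pick $\gamma$ in the relative interior of a full-dimensional GKZ chamber inside $\Mov(X)$.
\item Check that $X_\gamma$ is projective, because it is a GIT quotient of an affine variety by a linearized character, and $\Q$-factorial, because $\gamma$ is chamber-generic.
\item Check that $X_\gamma$ has the same Cox ring $R$. For this, show that the unstable locus $\overline X\setminus\overline X^{ss}(\gamma)$ has codimension at least $2$; this follows from $\gamma\in\Relint\Mov(X)$ via the ADHL criterion.
\end{enumerate}
It follows that $X_\gamma$ is a MDS.

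Finally I need a \sQm\ $X\dashrightarrow X_\gamma$. Both $\widehat X$ and $\overline X^{ss}(\gamma)$ are big open subsets of $\overline X$, with complements of codimension at least $2$. The birational map induced on their common open subset therefore identifies the quotients outside codimension one. This is exactly the statement of Thm.~4.3.3.1 in \cite{ADHL}, which I would invoke for this step.

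The main obstacle is step 4 together with this identification: I must ensure that the non-projective $X$, which corresponds to a ``bunch of cones'' that is not a GIT chamber, still shares its Cox ring and codimension-one structure with the chamber quotient. I would handle this through the bunched-ring formalism of \cite{ADHL}, checking that the relevant orbit cones are the same in codimension one.
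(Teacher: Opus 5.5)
Your proposal is correct and takes essentially the paper's route. The paper gives no argument of its own but quotes Lemma~1 of \cite{R-wMDS} and Thm.~4.3.3.1 of \cite{ADHL}; your ``$\Leftarrow$'' is exactly the invariance of $\Cl$ and $\Cox$ under a \sQm\ recorded in Remark~\ref{rem}~$(v)$, and your ``$\Rightarrow$'' is the GIT/bunched-ring argument behind that theorem, which you also invoke at the end. The one substantive point you only assert is step~1, the full-dimensionality of $\Mov(X)$. The citation supplies it, and in your bunched-ring setup you can also check it directly. Let $w_1,\dots,w_n$ be the degrees of pairwise non-associated $\Cl(X)$-prime generators of $\Cox(X)$, and $v_1,\dots,v_n$ their Gale dual vectors. $\Q$-factoriality and the bunch condition force the $v_i$ to be nonzero and pairwise not positively proportional, and then the strict triangle inequality shows that $\sum_i\|v_i\|\,w_i$ lies in the interior of $\Mov(X)=\bigcap_i\langle w_j : j\neq i\rangle$.
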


\begin{remark}\label{rem}
  Let us recall a few facts from notation introduced in \cite{R-wMDS}.
  \begin{itemize}
    \item[$(i)$] A wMDS is an irreducible and $\Q$-factorial algebraic variety $X$ such that every invertible global function is constant i.e.
    \begin{equation}\label{H0}
      H^0(X,\cO_X)\cong \K^*\,,
    \end{equation}
    its class group $\Cl(X)$ is a finitely generated (f.g.) abelian group of rank
$r := rk(\Cl(X))$ and its Cox ring $\Cox(X)$ is a finitely generated $\K$-algebra. Then $r$ is called the \emph{rank} of $X$ (or the \emph{Picard number}, due to $\Q$-factoriality) and condition (\ref{H0}) is clearly satisfied when X is complete.
    \item[$(ii)$] A projective (hence complete) wMDS is called a Mori Dream Space (MDS), after Hu and Keel \cite{Hu-Keel}.
    \item[$(iii)$] As defined in \cite{Hu-Keel}, a \sQm\ is an isomorphism in codimension 1.
    \item[$(iv)$] A wMDS which is a toric variety $X(\Si)$ is characterized by the fact that its Cox ring is a polynomial algebra
    \begin{equation*}
      \Cox(X(\Si))\cong \K\left[\{x_\rho\}_{\rho\in\Si(1)}\right]
    \end{equation*}
    (see e.g. \cite[Prop.~2]{R-wMDS} and considerations given in the following \cite[Rmk.~2]{R-wMDS}).
    \item[$(v)$] A \sQm\ preserves divisors and so the class group $\Cl(X)$ and the Cox ring $\Cox(X)$.
  \end{itemize}
  Then, in Theorem~\ref{lemma1}, $X$ is toric if and only if $X'$ is toric.
\end{remark}

\subsection{About the structure of the isomorphism in  codimension 1}\label{ssez:sQm} The isomorphism in codimension 1, $\phi:X\dashrightarrow X'$ given by Theorem~\ref{thm.1.1} is a composition $\phi=f\circ\pi^{-1}$, where
\begin{equation*}
  \phi:X\stackrel{\pi^{-1}}{\dashrightarrow}X'\stackrel{f}{\dashrightarrow}X''
\end{equation*}
and $\pi^{-1}$ is the inverse birational map of a small $\Q$-factorial resolution $\pi:X'\to X$, while $f$ turns out to be a $D$-flip (in the sense of Def.~19 in \cite{R-wMDS} and then also in the sense of Def.~2.2 in \cite{FS2026} with respect to any possible small birational morphism $X\to W$ of relative Picard number $\rho(X/W)=1$) for any $\Q$-divisor $D$ whose class is in the relative interior  $\Relint(f^*\Nef(X'')\setminus \Nef(X'))$. In fact:
\begin{itemize}
  \item under our hypotheses, $\overline{\Mov}(X)\cong\overline{\Mov}(X')\cong\overline{\Mov}(X'')\cong\Mov(Q)$ is a full dimensional convex polyhedral cone supporting the fan structure given by the secondary fan (GKZ-subdivision), being $Q$ a \emph{weight matrix} of $X$ (see \S~2.5 in \cite{R-wMDS});
  \item being $X''$ projective, $f^*\Nef(X'')$ is a full dimensional chamber of the secondary fan supported by $\Mov(Q)$ (see Prop.~7 in \cite{R-wMDS} and references therein);
  \item if $X'$ is not projective then $\Nef(X')$ is a face of $f^*\Nef(X'')$ in the secondary fan structure supported by $\Mov(Q)$, so that  $$\Relint(f^*\Nef(X'')\setminus \Nef(X'))\neq\emptyset$$
  \item for any $\Q$-divisor $D$ of $X'$ such that $[D]\in \Relint(f^*\Nef(X'')\setminus \Nef(X'))$ and for any complete curve $C\subset X'$ such that $-mD\cdot C >0$ for some positive integer $m$, then $f(C)$ is a point in $X''$, as the birational transform $f_*(mD):=\overline{f(mD)}$ is ample on $X''$ (see Thm.~6 in \cite{R-wMDS}).
\end{itemize}

\section{The Fujino-Sato Conjecture revisited and proved} The above Conjecture~\ref{conj.1.4} is implied by the following

\begin{theorem}\label{thm}
  Let $X$ be a non-projective complete $\Q$-factorial wMDS. Then, there exists a Cartier divisor $D$ of $X$ and a $D$-flip $f:X\dashrightarrow X'$ such that $X'$ is a MDS. In particular, $X$ is a toric variety if and only if $X'$ is a toric variety.
\end{theorem}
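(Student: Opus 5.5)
By Theorem~\ref{lemma1}, since $X$ is a complete $\Q$-factorial wMDS, there is a \sQm\ $g:X\dashrightarrow X''$ with $X''$ a MDS. The plan is to show that $g$ itself, or a suitable modification of it, is a $D$-flip for a well-chosen Cartier divisor $D$. This runs along the lines of \S~\ref{ssez:sQm}, with the small resolution $\pi$ removed because $X$ is already $\Q$-factorial.

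First, by Remark~\ref{rem}$(v)$ the \sQm\ $g$ identifies $\Cl(X)\otimes\Q\cong\Cl(X'')\otimes\Q$ and $\Cox(X)\cong\Cox(X'')$. Hence $\overline{\Mov}(X)\cong\overline{\Mov}(X'')$, and this cone carries the GKZ chamber structure coming from the common Cox ring (the secondary fan of a weight matrix $Q$ in the toric case, and in general the GIT-chamber decomposition of \cite{ADHL}, \cite{R-wMDS}). Since $X''$ is projective, $g^*\Nef(X'')$ is a full-dimensional chamber. Since $X$ is $\Q$-factorial but not projective, $\Nef(X)$ is a cone of the chamber structure that is not full-dimensional; otherwise an interior class would be ample, by the characterization of the ample cone in \cite[Thm.~6, Prop.~7]{R-wMDS}.

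The key step is to choose the target chamber so that $\Nef(X)$ is a face of it. I would argue that every cone of the GKZ fan supported on $\Mov(Q)$ is a face of some full-dimensional chamber, because the GKZ decomposition is a complete fan on its full-dimensional support. So I choose a full-dimensional chamber $\gamma\supseteq\Nef(X)$, and let $X'$ be the MDS corresponding to $\gamma$, which is $\Q$-factorial. This is possible exactly when $\gamma$ is a chamber inside the moving cone; by \cite[Prop.~7]{R-wMDS}, every full-dimensional chamber in $\Mov$ gives a $\Q$-factorial projective \sQm. Let $f:X\dashrightarrow X'$ be the induced \sQm. Then $\Relint(f^*\Nef(X')\setminus\Nef(X))\neq\emptyset$.

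Pick a class in this relative interior, which is a rational open condition, and clear denominators to get a Cartier divisor $D$; this uses $\Q$-factoriality, since some multiple of a Weil divisor is Cartier. Then $f_*D$ is ample on $X'$. As in the last bullet of \S~\ref{ssez:sQm}, any curve $C$ with $D\cdot C<0$ is contracted by $f$, and $f$ is an isomorphism in codimension 1, so $f$ is a $D$-flip in the sense of \cite[Def.~19]{R-wMDS}. The toric statement follows from Remark~\ref{rem}$(iv)$--$(v)$, since the Cox ring is preserved and toricity is equivalent to the Cox ring being polynomial.

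I expect the main obstacle to be justifying that $\Nef(X)$ is a cone of the GKZ fan lying in a full-dimensional chamber of $\Mov(X)$. One issue is that for a non-projective $X$ the nef cone might not correspond to a GIT chamber, and one must check that it can meet the boundary of $\Mov$ only in a way that still allows a full-dimensional chamber inside $\Mov$ to contain it. A second issue is making sure that the curve condition in the definition of a $D$-flip holds relative to $\Nef(X)$ being a proper face of $\gamma$. I would attack both points using the bunched-ring description of \cite[\S3.3, 4.3]{ADHL}, where $\Q$-factorial varieties arising from the same Cox ring correspond to cones of the GIT fan together with their relative interiors.
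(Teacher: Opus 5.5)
Your proposal is correct and follows the paper's proof essentially step for step. Non-projectivity forces $\Nef(X)$ to be a non-full-dimensional cone of the GKZ/GIT chamber structure on $\overline{\Mov}(X)$, hence a face of a full-dimensional chamber $\gamma=f^*\Nef(X')$ for an MDS $X'$ (Prop.~9 of \cite{R-wMDS}, Thm.~4.3.3.1 of \cite{ADHL}); a Cartier multiple of a class in $\Relint(\gamma\setminus\Nef(X))$ then makes $f$ a $D$-flip, and toricity transfers through the Cox ring.

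The step you flag as the main obstacle, that $\Nef(X)$ is a single cone of the chamber structure and hence a face of some full-dimensional chamber, is simply asserted in the paper as well. Your preliminary \sQm\ $g$ from Theorem~\ref{lemma1} only serves to set up a chamber structure that the paper takes directly on $\overline{\Mov}(X)$.
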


\begin{proof} If $X$ is non-projective then $\Nef(X)$ cannot be a full dimensional chamber of the secondary fan supported by the moving cone $\overline{\Mov(X)}$. Then it is a face of a full dimensional chamber $\g$. By Prop.~9 in \cite{R-wMDS}, there exists a MDS $X'$ and a \sQm\ $f:X\dashrightarrow X'$ such that $\g=f^*\Nef(X')$: actually this is a consequence of Thm.~4.3.3.1 in \cite{ADHL}, which is a generalization to the complete, non-projective setup of Prop.~1.11 in \cite{Hu-Keel}. Considerations given in \S~\ref{ssez:sQm} show that $\mathcal{N}:=\Relint(f^*\Nef(X')\setminus \Nef(X))\neq\emptyset$. Since $X$ is $\Q$-factorial, the choice of a $\Q$-divisor whose class is in $\mathcal{N}$ gives rise, up to a multiple, to a Cartier divisor $D$ such that $[D]\in\mathcal{N}$. Then $f$ is a $D$-flip. What observed in Remark~\ref{rem} shows that $X$ is toric if and only if $X'$ is toric.
\end{proof}

\begin{figure}
\begin{center}
\includegraphics[width=8truecm]{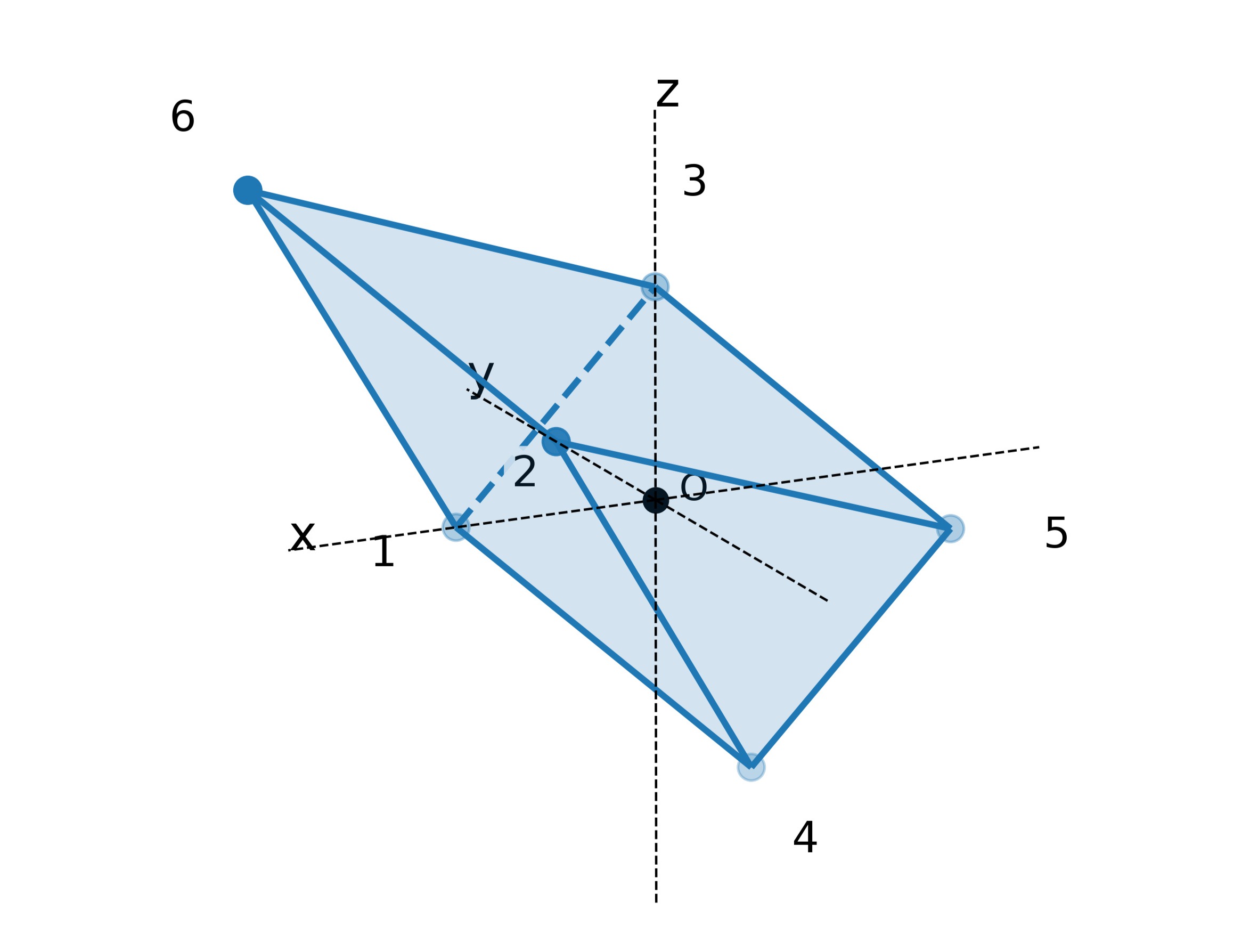}
\caption{\label{Fig1}The prism whose vertices are given by the columns of the fan matrix $V$ in Example~\ref{ex}}
\end{center}
\end{figure}

\begin{example}\label{ex} The present example is devoted to give an account of Theorems~\ref{lemma1} and \ref{thm}, in the toric setup, as considered by Fujino and Sato. We will drop any smoothness hypothesis and consider rank 3 toric varieties to visualize the secondary fan. In any case, this last restriction does not affect the generality of the method in the least.

Consider the prism whose vertex are given in $\R^3$ by the columns of the matrix
\begin{equation*}
  V:= \left(
  \begin{array}{cccccc}
    1&0&0&0&-1&1 \\
    0&1&0&-1&-1&2 \\
    0&0&1&-1&0&1 \\
  \end{array}
\right)
\end{equation*}
and represented in figure \ref{Fig1}, where numbers labeling the vertices are the same of the corresponding column in $V$. Let $\Si$ be the fan spanned by the faces of that prism. The associated toric variety $X(\Si)$ is clearly complete and non-$\Q$-factorial. In particular,  maximal cones of $\Si$ are given by
\begin{equation*}
  \Si(3)=\left\{\langle 1,2,4,6\rangle,\langle 1,3,4,5\rangle,\langle 2,3,5,6\rangle,\langle 2,4,5\rangle,\langle 1,3,6\rangle\right\}
\end{equation*}
where numbers denotes the rays generated by the corresponding column in $V$.

Following the line proving the first part of Fujino-Sato Theorem~1.1 and opening \S~\ref{thm.1.1}, the fan $\Si$ admits $2^3$ simplicial subdivisions, one for each possible subdivision of the 3 quadrangular faces by inserting a 2-dimensional face. This fact gives rise to 8 possible $\Q$-factorial small resolutions $\pi_i:X_i(\Si_i)\to X(\Si)$, for $i=1,\ldots,8$.

Namely they are the complete $\Q$-factorial toric varieties defined by the following simplicial subdivision of $\Si$:
\begin{eqnarray*}
% \nonumber to remove numbering (before each equation)
  \Si_1 &=& \{ \langle1, 2, 4\rangle, \langle1, 2, 6\rangle,\langle1, 3, 4\rangle,  \langle3, 4, 5\rangle, \langle2, 3, 5\rangle, \langle2, 3, 6\rangle, \langle2, 4, 5\rangle, \langle1, 3, 6\rangle\} \\
  \Si_2 &=& \{ \langle1, 2, 4\rangle, \langle1, 2, 6\rangle, \langle1, 4, 5\rangle, \langle1, 3, 5\rangle, \langle2, 5, 6\rangle, \langle3, 5, 6\rangle, \langle2, 4, 5\rangle, \langle1, 3, 6\rangle\} \\
  \Si_3 &=& \{\langle1, 4, 6\rangle,  \langle2, 4, 6\rangle, \langle1, 4, 5\rangle, \langle1, 3, 5\rangle, \langle2, 5, 6\rangle, \langle3, 5, 6\rangle, \langle2, 4, 5\rangle,  \langle1, 3, 6\rangle\} \\
  \Si_4 &=& \{\langle1, 2, 4\rangle,  \langle1, 2, 6\rangle,\langle1, 4, 5\rangle, \langle1, 3, 5\rangle, \langle2, 3, 5\rangle,  \langle2, 3, 6\rangle, \langle 2, 4, 5\rangle, \langle1, 3, 6\rangle\} \\
  \Si_5 &=& \{\langle1, 4, 6\rangle, \langle2, 4, 6\rangle, \langle1, 3, 4\rangle, \langle3, 4, 5\rangle, \langle2, 5, 6\rangle,  \langle3, 5, 6\rangle, \langle2, 4, 5\rangle, \langle1, 3, 6\rangle\} \\
  \Si_6 &=& \{\langle1, 4, 6\rangle, \langle2, 4, 6\rangle,  \langle1, 3, 4\rangle, \langle3, 4, 5\rangle,  \langle2, 3, 5\rangle, \langle2, 3, 6\rangle, \langle2, 4, 5\rangle, \langle1, 3, 6\rangle\} \\
  \Si_7 &=& \{\langle1, 2, 4\rangle, \langle1, 2, 6\rangle, \langle1, 3, 4\rangle, \langle3, 4, 5\rangle, \langle2, 5, 6\rangle,  \langle3, 5, 6\rangle, \langle 2, 4, 5\rangle, \langle1, 3, 6\rangle\} \\
  \Si_8 &=& \{\langle1, 4, 6\rangle, \langle2, 4, 6\rangle,\langle1, 4, 5\rangle, \langle1, 3, 5\rangle,  \langle2, 3, 5\rangle,  \langle2, 3, 6\rangle, \langle2, 4, 5\rangle, \langle1, 3, 6\rangle\}
\end{eqnarray*}
All of them are obtained by introducing three 2-dimensional facets, so that every induced birational morphism $\pi_i:X_i\to X$, $i=1,\ldots,8$, is the contraction of 3 exceptional curves.

To consider the secondary fan supported by $\overline{\Mov}(X)$ one has first of all to compute a Gale dual matrix of the fan matrix $V$, for instance given by
\begin{equation*}
  Q=\left(
     \begin{array}{cccccc}
       1&1&0&0&1&0\\
      0&1&1&1&0&0\\
       0&0&0&1&1&1 \\
     \end{array}
   \right)
\end{equation*}
also called a weight matrix of $X$. Then, for any $1\le i\le 8$,
\begin{equation*}
  \overline{\Eff}(X_i)\cong\overline{\Eff}(X)\cong\Eff(Q)=\langle1,3,5\rangle \supset \langle 2,4,5\rangle =\Mov(Q)\cong\overline{\Mov}(X)\cong\overline{\Mov}(X_i)
\end{equation*}
where numbers now refer to the corresponding column of $Q$. The bunches of cones corresponding to fans $\Si_1,\ldots,\Si_8$ are given, respectively by
\begin{eqnarray*}
% \nonumber to remove numbering (before each equation)
  \B_1 &=& \{ \langle3,5,6\rangle, \langle3,4,5\rangle,\langle2,5,6\rangle,  \langle1,2,6\rangle, \langle1,4,6\rangle, \langle1,4,5\rangle, \langle1,3,6\rangle, \langle2,4,5\rangle\} \\
  \B_2 &=& \{ \langle3,5,6\rangle, \langle3,4,5\rangle, \langle2,3,6\rangle, \langle2,4,6\rangle, \langle1,3,4\rangle, \langle1,2,4\rangle, \langle1,3,6\rangle, \langle2,4,5\rangle\} \\
  \B_3 &=& \{\langle2,3,5\rangle,  \langle1,3,5\rangle, \langle2,3,6\rangle, \langle2,4,6\rangle, \langle1,3,4\rangle, \langle1,2,4\rangle, \langle1,3,6\rangle,  \langle2,4,5\rangle\} \\
  \B_4 &=& \{\langle3,5,6\rangle,  \langle3,4,5\rangle,\langle2,3,6\rangle, \langle2,4,6\rangle, \langle1,4,6\rangle,  \langle1,4,5\rangle, \langle 1,3,6\rangle, \langle2,4,5\rangle\} \\
  \B_5 &=& \{\langle2,3,5\rangle, \langle1,3,5\rangle, \langle2,5,6\rangle, \langle1,2,6\rangle, \langle1,3,4\rangle,  \langle1,2,4\rangle, \langle1,3,6\rangle, \langle2,4,5\rangle\} \\
  \B_6 &=& \{\langle2,3,5\rangle, \langle1,3,5\rangle,  \langle2,5,6\rangle, \langle1,2,6\rangle,  \langle1,4,6\rangle, \langle1,4,5\rangle, \langle1,3,6\rangle, \langle2,4,5\rangle\} \\
  \B_7 &=& \{\langle3,5,6\rangle, \langle3,4,5\rangle, \langle2,5,6\rangle, \langle1,2,6\rangle, \langle1,3,4\rangle,  \langle1,2,4\rangle, \langle 1,3,6\rangle, \langle2,4,5\rangle\} \\
  \B_8 &=& \{\langle2,3,5\rangle, \langle1,3,5\rangle,\langle2,3,6\rangle, \langle2,4,6\rangle,  \langle1,4,6\rangle,  \langle1,4,5\rangle, \langle1,3,6\rangle, \langle2,4,5\rangle\}
\end{eqnarray*}
The corresponding nef cones are then given by $\Nef(X_i)=\bigcap_{\tau\in\B_i}\tau$, that is,  if $1\le i\le 6$ then $(\pi_i^{-1})^*\Nef(X_i)$ is the full-dimensional chamber $i$ of the secondary fan represented in figure \ref{Fig2}, and, calling $-k:=\left[\sum_{\rho\in\Si(1)}D_\rho\right]$ the anti-canonical class, if $7\le i\le 8$ then $(\pi_1^{-1})^*\Nef(X_i)=\langle -k\rangle$. This means that $\pi_i$ is \emph{projective} $\Q$-factorial resolution of $X$ when $1\le i\le 6$ and a \emph{non-projective} $\Q$-factorial resolution of $X$ when $i=7,8$.

\begin{figure}
\begin{center}
\includegraphics[width=8truecm]{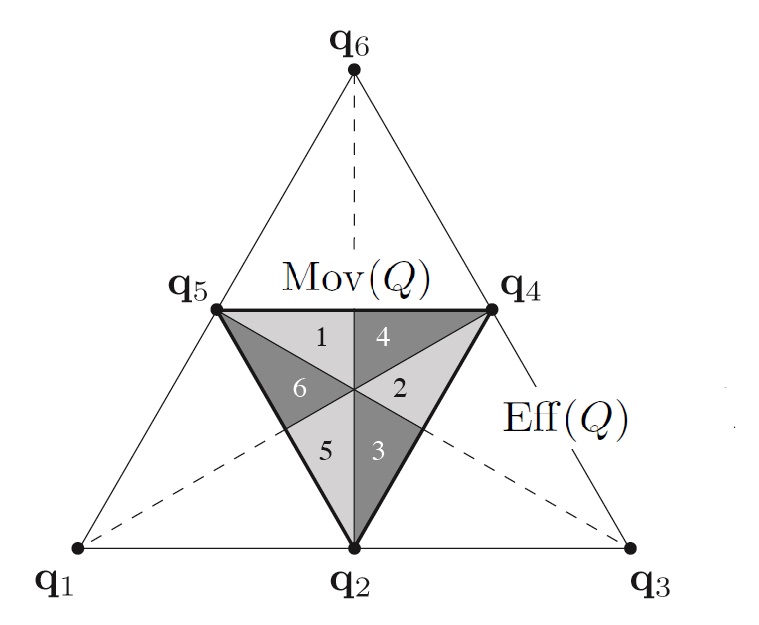}
\caption{\label{Fig2}The section of the cone $\Eff(Q)$ in Example~\ref{ex}, which is the positive orthant of $\R^3$, with the plane $x_1+x_2+x_3=1$.}
\end{center}
\end{figure}

To give an account of the argument proving Theorem~\ref{thm}, notice that the cone generated by the anti-canonical class $\langle -k\rangle$ is a face of any full dimensional chamber of the secondary fan on $\Mov(Q)$, that is, chambers from 1 to 6 in figure \ref{Fig2}. Consider e.g. $\pi_1:X_1\to X$ and the class
\begin{equation*}
  \left(
     \begin{array}{c}
       10 \\
       8 \\
       12 \\
     \end{array}
   \right)\in\Relint(\pi_1^{-1})^*\Nef(X_1)=\Relint\left\langle\left(
                                                                  \begin{array}{c}
                                                                    1 \\
                                                                    0 \\
                                                                    1 \\
                                                                  \end{array}
                                                                \right),\left(
                                                                          \begin{array}{c}
                                                                            1 \\
                                                                            1 \\
                                                                            2 \\
                                                                          \end{array}
                                                                        \right),\left(
                                                                                  \begin{array}{c}
                                                                                    1 \\
                                                                                    1 \\
                                                                                    1 \\
                                                                                  \end{array}
                                                                                \right)
   \right\rangle
\end{equation*}
which is the class of the Cartier ample divisor
$$D=4D_1+2D_2+2D_3+4D_4+4D_5+4D_6$$
Then, $\pi_1^{-1}:X\dashrightarrow X_1$ is a $D$-flip, as $[D]\not\in\Nef(X)=\langle -k\rangle$.
For the same reason, given the $\Q$-factorial complete and non-projective toric variety $X_7$, then
\begin{equation*}
  \pi_1^{-1}\circ\pi_7: X_7\dashrightarrow X_1
\end{equation*}
is a $D$-flip to the projective $\Q$-factorial toric variety $X_1$.

\end{example}

\bibliography{MILEA}
\bibliographystyle{acm}
\end{document}